\def\rr{{\mathbb R}}
\def\rn{{\mathbb{R}^n}}
\def\zz{{\mathbb Z}}
\def\nn{{\mathbb N}}
\def\cp{{\mathcal P}}
\def\cs{{\mathcal S}}
\def\fz{\infty }
\def\az{\alpha}
\def\lz{\lambda}
\def\pa{\partial}
\def\f{\frac}
\def\lf{\left}
\def\r{\right}
\def\ls{\lesssim}
\def\noz{\nonumber}
\def\wz{\widetilde}
\def\st{\subset}
\def\rg{\rangle}
\def\supp{\mathop\mathrm{\,supp\,}}
\def\va{\vec{a}}
\def\vp{\vec{p}}
\def\vh{{H_{\va}^{\vp}(\rn)}}
\def\vah{{H_{\va}^{\vp,\,r,\,s}(\rn)}}
\def\vfah{{H_{\va,\,{\rm fin}}^{\vp,\,r,\,s}(\rn)}}
\def\vfahfz{{H_{\va,\,{\rm fin}}^{\vp,\,\fz,\,s}(\rn)}}
\def\lv{{L^{\vp}(\rn)}}
\def\lq{\mathcal{L}_{\vp,\,q,\,s}^{\va}(\rn)}
\def\lr{\mathcal{L}_{\vp,\,r',\,s}^{\va}(\rn)}
\newtheorem{theorem}{Theorem}[section]
\newtheorem{lemma}[theorem]{Lemma}
\newtheorem{corollary}[theorem]{Corollary}
\theoremstyle{definition}
\newtheorem{remark}[theorem]{Remark}
\newtheorem{definition}[theorem]{Definition}
\renewcommand{\appendix}{\par
   \setcounter{section}{0}%
   \setcounter{subsection}{0}%
   \setcounter{subsubsection}{0}%
   \gdef\thesection{\@Alph\c@section}%
   \gdef\thesubsection{\@Alph\c@section.\@arabic\c@subsection}%
   \gdef\theHsection{\@Alph\c@section.}%
   \gdef\theHsubsection{\@Alph\c@section.\@arabic\c@subsection}%
   \csname appendixmore\endcsname
 }
\numberwithin{equation}{section}
\begin{document}

\arraycolsep=1pt

\title{\bf\Large Dual Spaces of Anisotropic Mixed-Norm Hardy Spaces
\footnotetext{\hspace{-0.35cm} 2010 {\it
Mathematics Subject Classification}. Primary 42B35;
Secondary 42B30, 46E30.
\endgraf {\it Key words and phrases.}
anisotropic Euclidean space, (mixed-norm) Campanato space, (mixed-norm) Hardy space,
duality.
\endgraf This project is supported by the National
Natural Science Foundation of China
(Grant Nos.~11761131002, 11571039, 11726621 and 11471042)
and also by the Joint Research Project Between China Scholarship Council and German
Academic Exchange Service (PPP) (Grant No.~LiuJinOu [2016]6052).}}
\author{Long Huang, Jun Liu, Dachun Yang\footnote{Corresponding author / March 26, 2018 / Newest Version.}\ \
and Wen Yuan}
\date{}
\maketitle

\vspace{-0.8cm}

\begin{center}
\begin{minipage}{13cm}
{\small {\bf Abstract}\quad
Let $\vec{a}:=(a_1,\ldots,a_n)\in[1,\infty)^n$,
$\vec{p}:=(p_1,\ldots,p_n)\in(0,\infty)^n$
and $H_{\vec{a}}^{\vec{p}}(\mathbb{R}^n)$
be the anisotropic mixed-norm Hardy space associated with $\vec{a}$
defined via the non-tangential grand maximal function. In this article,
the authors give the dual space of
$H_{\vec{a}}^{\vec{p}}(\mathbb{R}^n)$, which was asked
by Cleanthous et al. in [J. Geom. Anal. 27 (2017), 2758-2787].
More precisely,
via first introducing the anisotropic mixed-norm Campanato space
$\mathcal{L}_{\vec{p},\,q,\,s}^{\vec{a}}(\mathbb{R}^n)$ with $q\in[1,\infty]$ and
$s\in\mathbb{Z}_+:=\{0,1,\ldots\}$,
and applying the known atomic and finite atomic characterizations of
$H_{\vec{a}}^{\vec{p}}(\mathbb{R}^n)$, the authors prove
that the dual space of $H_{\vec{a}}^{\vec{p}}(\mathbb{R}^n)$ is
the space $\mathcal{L}_{\vec{p},\,r',\,s}^{\vec{a}}(\mathbb{R}^n)$ with
$\vec{p}\in(0,1]^n$, $r\in(1,\infty]$, $1/r+1/r'=1$ and
$s\in[\lfloor\frac{\nu}{a_-}(\frac{1}{p_-}-1) \rfloor,\infty)\cap\mathbb{Z}_+$,
where $\nu:=a_1+\cdots+a_n$, $a_-:=\min\{a_1,\ldots,a_n\}$,
$p_-:=\min\{p_1,\ldots,p_n\}$ and,
for any $t\in \mathbb{R}$, $\lfloor t\rfloor$ denotes the largest integer
not greater than $t$. This duality result is new even for
the isotropic mixed-norm Hardy spaces on $\mathbb{R}^n$.
}
\end{minipage}
\end{center}

\section{Introduction\label{s1}}

The main purpose of this article is to give the dual space
of the anisotropic mixed-norm Hardy space on $\rn$. Recall that,
as a generalization of the classical Hardy space $H^p(\rn)$,
the anisotropic mixed-norm Hardy space $\vh$, in which the constant
exponent $p\in(0,\fz)$ is replaced by an exponent vector $\vp\in (0,\fz)^n$ and
the Euclidean norm $|\cdot|$ on $\rn$ by the anisotropic homogeneous
quasi-norm $|\cdot|_{\va}$ with $\va\in[1,\fz)^n$ (see Definition \ref{2d1} below),
was first considered by Cleanthous et al. in \cite{cgn17}.
Cleanthous et al. \cite{cgn17} introduced the anisotropic mixed-norm Hardy space $\vh$
with $\va\in [1,\fz)^n$ and $\vp\in (0,\fz)^n$ via the non-tangential grand maximal
function and investigated its radial or its non-tangential maximal function characterizations.
In particular, they mentioned several natural questions to be studied
(see \cite[p.\,2760]{cgn17}), which include the atomic characterizations
and the duality theory of $\vh$ as well as the boundedness of
anisotropic singular integral operators on these Hardy-type spaces.
To answer these questions and also to complete the real-variable theory of
the anisotropic mixed-norm Hardy space $\vh$, Huang et al. \cite{hlyy} established
several equivalent characterizations of $\vh$, respectively, in terms of the atom,
the finite atom, the Lusin area function, the Littlewood-Paley $g$-function or
$g_{\lambda}^\ast$-function and also obtained
the boundedness of anisotropic Calder\'{o}n-Zygmund operators on $\vh$.
However, the aforementioned question on duality theory of $\vh$ is still missing so far.
In addition, the theory of anisotropic or mixed-norm function spaces was
developed well in recent years; see, for example,
\cite{cs,cgn17bs,cgn17-2,htw17,jms15}.

It is well known that the duality theory of classical Hardy spaces on the
Euclidean space $\rn$ plays an important role in many branches of analysis such
as harmonic analysis and partial differential equations, and has been systematically
considered and developed; see, for example, \cite{fs72,m94,s93}.
In 1969, Duren et al. \cite{drs69} first showed that the dual space of the Hardy space
$H^p(\mathbb{D})$ of holomorphic functions is the Lipshitz space,
where $p\in(0,1)$ and the \emph{symbol} $\mathbb{D}$ denotes the unit disc of $\rn$.
Later on, Walsh \cite{w73} further extended this duality result to the Hardy
space on the upper half-plane $\mathbb{R}^{n+1}_+$ with $p\in(0,1)$. Moreover,
the prominent duality theory, namely, the bounded mean oscillation function space
$\mathop{\mathrm{BMO}}(\rn)$ is the dual space of the Hardy space $H^1(\rn)$ is due
to Fefferman and Stein \cite{fs72}. It is worth to point out that the complete duality
theory of the classical Hardy space $H^p(\rn)$, with $p\in(0,1]$, is given by Taibleson
and Weiss \cite{tw80}, in which the dual space of $H^p(\rn)$ was proved to be
the Campanato space introduced by Campanato \cite{c64}. We should also point out that,
nowadays, the theory related to
Campanato spaces has been developed well and proved useful in many areas of analysis;
see, for example, \cite{jxy16,ky17,llw16,ly13,ns12,ylk17,ysy}. In addition,
based on the duality results of the classical Hardy spaces mentioned as above
(see \cite{drs69,fs72,w73}) as well as the celebrated work of Calder\'{o}n and
Torchinsky \cite{ct75} on the parabolic Hardy space, Calder\'{o}n and Torchinsky \cite{ct77}
further studied the duality theory of the parabolic Hardy space. For more developments
of the duality theory of function spaces and their applications in harmonic analysis and
partial differential equations, we refer the reader to
\cite{mb03,dll17,lby14,mmv15,ns12,ylk17,ysy,zsy16}.

Notice that, when $\va\in[1,\fz)^n$ and
$\vp:=(p_1,\ldots,p_n)\in(1,\fz)^n$, by \cite[Theorem 6.1]{cgn17},
we know that $\vh=\lv$ with equivalent quasi-norms, which, together with the known fact
that the dual of $\lv$ is $L^{\vp'}(\rn)$ (see \cite[p.\,304, Theorem 1.a)]{bp61}), where $\vp':=(p_1',\ldots,p_n')$ and,
for any $i\in\{1,\ldots,n\}$, $1/p_i+1/p_i'=1$, implies that, for any $\vp\in(1,\fz)^n$,
$L^{\vp'}(\rn)$ is the dual space of $\vh$.
In this article, we further complete the duality theory of
$H_{\vec{a}}^{\vec{p}}(\mathbb{R}^n)$, which partly answers the aforementioned question
of Cleanthous et al. in \cite[p.\,2760]{cgn17} on the duality theory. Precisely, let
$\vec{a}:=(a_1,\ldots,a_n)\in[1,\infty)^n$, $\vec{p}:=(p_1,\ldots,p_n)\in(0,\infty)^n$,
$q\in[1,\fz]$ and $s\in\mathbb{Z}_+:=\{0,1,\ldots\}$, we first introduce the anisotropic
mixed-norm Campanato space $\lq$.
Then, applying the known atomic and finite atomic characterizations of
$H_{\vec{a}}^{\vec{p}}(\mathbb{R}^n)$ obtained in \cite{hlyy}
(see Lemmas \ref{3l1} and \ref{3l2} below),
we prove that the dual space of $H_{\vec{a}}^{\vec{p}}(\mathbb{R}^n)$ is
the space $\mathcal{L}_{\vec{p},\,r',\,s}^{\vec{a}}(\mathbb{R}^n)$ with
$\vec{p}\in(0,1]^n$, $r\in(1,\fz]$, $1/r+1/r'=1$ and
$s\in[\lfloor\frac{\nu}{a_-}(\frac{1}{p_-}-1) \rfloor,\infty)\cap\mathbb{Z}_+$,
where $\nu:=a_1+\cdots+a_n$, $a_-:=\min\{a_1,\ldots,a_n\}$,
$p_-:=\min\{p_1,\ldots,p_n\}$ and, for any $t\in \mathbb{R}$,
the \emph{symbol} $\lfloor t\rfloor$ denotes the largest integer
not greater than $t$. This duality result is new even for the isotropic mixed-norm
Hardy spaces on $\mathbb{R}^n$. We should point out that, when
$\vp:=(p_1,\ldots,p_n)\in(0,\fz)^n$ with $p_{i_0}\in(0,1]$ and $p_{j_0}\in(1,\fz)$
for some $i_0,j_0\in\{1,\ldots,n\}$, the dual space of $\vh$ is still unknown so far.

Concretely, this article is organized as follows.

In Section \ref{s2}, we first recall some notions and notation appearing in this article,
including the anisotropic homogeneous quasi-norm, the anisotropic bracket and
the mixed-norm Lebesgue space. Then we present the definition of the anisotropic mixed-norm
Hardy spaces $\vh$ via the non-tangential grand maximal functions from \cite{cgn17}
(see Definition \ref{2d5} below).

Section \ref{s3} is devoted to establishing the duality theory of $\vh$ with
$\va\in [1,\fz)^n$ and $\vp\in (0,1]^n$.
To this end, we first introduce the anisotropic mixed-norm Campanato space $\lq$
(see Definition \ref{3d1'} below), which includes the space $\mathop{\mathrm{BMO}}(\rn)$ of
John and Nirenberg \cite{jn61} as well as the classical Campanato spaces of
Campanato \cite{c64} as special cases [see Remark \ref{3r1}(ii) below].
Then, via borrowing some ideas from \cite[Theorem 3.5]{ly13} and
\cite[p.\,51, Theorem 8.3]{mb03}, we prove that the dual space of $\vh$ is the space
$\lr$ with $r\in(1,\fz]$, $1/r+1/r'=1$ and $s$ being as in \eqref{3e1} below
(see Theorem \ref{3t1} below). To be precise, by the known atomic and finite atomic
characterizations of $\vh$ (see Lemmas \ref{3l1} and \ref{3l2} below) as well as
an argument similar to that used in the proof of \cite[Theorem 3.5]{ly13}
(see also \cite[Theorem 5.2.1]{ylk17}), we show that the anisotropic mixed-norm
Campanato space $\lr$ is continuously embedded into $[\vh]^*$ with
$r$ and $s$ as in Theorem \ref{3t1} below, where the \emph{symbol} $[\vh]^*$ denotes
the dual space of $\vh$.
Conversely, to prove $[\vh]^*\st\lr$ and the inclusion is continuous, motivated by
\cite[Lemma 5.9]{zsy16} and \cite[p.\,51, Lemma 8.2]{mb03}, we first establish two
useful estimates (see, respectively, Lemmas \ref{3l3} and \ref{3l4} below), which
play a key role in the proof of Theorem \ref{3t1} and are also of independent interest.
Via these two lemmas, the atomic characterizations of $\vh$ again and the Hahn-Banach
theorem (see, for example, \cite[Theorem 3.6]{ru91}) as well as a proof similar to that
of \cite[p.\,51, Theorem 8.3]{mb03}, we then show that $[\vh]^*$ is continuously
embedded into $\lr$, which then completes the proof of Theorem \ref{3t1}.

Finally, we make some conventions on notation.
We always let
$\mathbb{N}:=\{1,2,\ldots\}$,
$\mathbb{Z}_+:=\{0\}\cup\mathbb{N}$
and $\vec0_n$ be the \emph{origin} of $\rn$.
For any multi-index
$\az:=(\az_1,\ldots,\az_n)\in(\mathbb{Z}_+)^n=:\mathbb{Z}_+^n$,
let $|\az|:=\az_1+\cdots+\az_n$ and
$\pa^{\az}:=(\f{\pa}{\pa x_1})^{\az_1} \cdots (\f{\pa}{\pa x_n})^{\az_n}$.
We denote by $C$ a \emph{positive constant}
which is independent of the main parameters,
but may vary from line to line. If $f\le Cg$, then we write $f\ls g$
for simplicity, and the \emph{symbol} $f\sim g$ means $f\ls g\ls f$.
For any $r\in[1,\fz]$, the notation $r'$ denotes
its \emph{conjugate index}, namely, $1/r+1/r'=1$.
Moreover, if $\vec{r}:=(r_1,\ldots,r_n)\in[1,\fz]^n$, we denote by
$\vec{r}':=(r_1',\ldots,r_n')$ its \emph{conjugate index}, namely,
for any $i\in\{1,\ldots,n\}$, $1/r_i+1/r_i'=1$. In addition,
for any set $F\subset\rn$, we denote by $F^\complement$ the
set $\rn\setminus F$, by $\chi_F$ its \emph{characteristic function}
and by $|F|$ its \emph{n-dimensional Lebesgue measure}.
For any $t\in\mathbb{R}$, the \emph{symbol} $\lfloor t\rfloor$ denotes
the \emph{largest integer not greater than $t$}. In what follows,
we denote by $C^{\fz}(\rn)$ the set of all
\emph{infinitely differentiable functions} on $\rn$.

\section{Preliminaries \label{s2}}

In this section, we recall the definition of the anisotropic mixed-norm
Hardy spaces from \cite{cgn17}. For this purpose, we first present the
notions of both anisotropic homogeneous quasi-norms and mixed-norm Lebesgue spaces.

For any $\az:=(\az_1,\ldots,\az_n)$, $x:=(x_1,\ldots,x_n)\in \rn$ and $t\in[0,\fz)$,
let $t^\az x:=(t^{\az_1}x_1,\ldots,t^{\az_n}x_n)$.
The following notion of anisotropic homogeneous quasi-norms
is from \cite{f66} (see also \cite{sw78}).

\begin{definition}\label{2d1}
Let $\va:=(a_1,\ldots,a_n)\in [1,\fz)^n$.
The \emph{anisotropic homogeneous quasi-norm} $|\cdot|_{\va}$,
associated with $\va$, is a non-negative measurable function on $\rn$
defined by setting $|\vec0_n|_{\va}:=0$ and, for any $x\in \rn\setminus\{\vec0_n\}$,
$|x|_{\va}:=t_0$, where $t_0$ is the unique positive number such that $|t_0^{-\va}x|=1$,
namely,
$$\f{x_1^2}{t_0^{2a_1}}+\cdots+\f{x_n^2}{t_0^{2a_n}}=1.$$
\end{definition}

\begin{remark}\label{2r1}
Let $\va\in[1,\fz)^n$. From \cite[Lemma 2.5(i) and (ii)]{hlyy},
it follows that, for any $t\in[0,\fz)$ and $x,\,y\in\rn$,
\begin{align}\label{2e1}
|x+y|_{\va}\le |x|_{\va}+|y|_{\va}
\quad{\rm and}\quad
\lf|t^{\va}x\r|_{\va}= t|x|_{\va},
\end{align}
which implies that $|\cdot|_{\va}$ is a norm if and only
if $\va:=(\overbrace{1,\ldots,1}^{n\ \rm times})$ and, in this case,
the homogeneous quasi-norm $|\cdot|_{\va}$ becomes the Euclidean norm $|\cdot|$.
\end{remark}

Now we recall the notions of the anisotropic bracket
and the homogeneous dimension from \cite{sw78},
which play a key role in the study on anisotropic function spaces.

\begin{definition}\label{2d2}
Let $\va:=(a_1,\ldots,a_n)\in [1,\fz)^n$. The \emph{anisotropic bracket},
associated with $\va$, is defined by setting, for any $x\in \rn$,
$$\langle x\rg_{\va}:=|(1,x)|_{(1,\va)}.$$
Furthermore, the \emph{homogeneous dimension} $\nu$ is defined as
$$\nu:=|\vec{a}|:=a_1+\cdots +a_n.$$
\end{definition}

For any $\va:=(a_1,\ldots,a_n)\in [1,\fz)^n$, let
\begin{align}\label{2e9}
a_-:=\min\{a_1,\ldots,a_n\}\hspace{0.35cm}
{\rm and}\hspace{0.35cm} a_+:=\max\{a_1,\ldots,a_n\}.
\end{align}

For any $\va\in [1,\fz)^n$, $r\in (0,\fz)$ and $x\in \rn$,
the \emph{anisotropic ball} $B_{\va}(x,r)$,
with center $x$ and radius $r$, is defined as
$B_{\va}(x,r):=\{y\in \rn:\,|y-x|_{\va} < r\}$.
Then \eqref{2e1} implies that $B_{\va}(x,r)= x+r^{\va}B_{\va}(\vec0_n,1)$ and
$|B_{\va}(x,r)|=\nu_n r^{\nu}$, where $\nu_n:=|B_{\va}(\vec0_n,1)|$
(see \cite[(2.12)]{cgn17}). Moreover, by \cite[Lemma 2.4(ii)]{hlyy}, we know that 
$B_0:=B_{\va}(\vec0_n,1)=B(\vec0_n,1)$, where $B(\vec0_n,1)$ denotes
the \emph{unit ball} of $\rn$, namely, $B(\vec0_n,1):=\{y\in \rn:\,|y|<1\}$.
For any $t\in (0,\fz)$, let
\begin{align}\label{2e2'}
B^{(t)}:=t^{\va} B_0=B_{\va}(\vec0_n,t).
\end{align}
Throughout this article, the \emph{symbol} $\mathfrak{B}$ always denotes the set of
all anisotropic balls, namely,
\begin{align}\label{2e2}
\mathfrak{B}:=\lf\{x+B^{(t)}:\ x\in\rn,\ t\in(0,\fz)\r\}.
\end{align}

Recall that, for any $r\in(0,\fz]$ and measurable set $E\subset\rn$,
the \emph{Lebesgue space} $L^r(E)$ is defined to be the set of all measurable functions $f$ such that
$$\|f\|_{L^r(E)}:=\lf[\int_E|f(x)|^r\,dx\r]^{1/r}<\fz$$
with the usual modification made when $r=\fz$. Then
we present the following notion of mixed-norm Lebesgue spaces from \cite{bp61}.

\begin{definition}\label{2d3}
Let $\vp:=(p_1,\ldots,p_n)\in (0,\fz]^n$. The \emph{mixed-norm Lebesgue space} $\lv$ is
defined to be the set of all measurable functions $f$ such that
$$\|f\|_{\lv}:=\lf\{\int_{\rr}\cdots\lf[\int_{\rr}\lf\{\int_{\rr}|f(x_1,\ldots,x_n)|^{p_1}
\,dx_1\r\}^{\f{p_2}{p_1}}\,dx_2\r]^{\f{p_3}{p_2}}\cdots\, dx_n\r\}^{\f{1}{p_n}}<\fz$$
with the usual modifications made when $p_i=\fz$ for some $i\in \{1,\ldots,n\}$.
\end{definition}

\begin{remark}\label{2r2}
For any $\vp\in(0,\fz]^n$, $(\lv,\|\cdot\|_{\lv})$ is a quasi-Banach
space and, for any $\vp \in [1,\fz]^n$, $(\lv,\|\cdot\|_{\lv})$ becomes a Banach space
(see \cite[p.\,304, Theorem 1]{bp61}). Obviously, when
$\vp:=(\overbrace{p,\ldots,p}^{n\ \rm times})$ with $p\in(0,\fz]^n$,
$\lv$ coincides with the classical Lebesgue space $L^p(\rn)$.
\end{remark}

For any $\vp:=(p_1,\ldots,p_n)\in (0,\fz)^n$, let
\begin{align}\label{2e10}
p_-:=\min\{p_1,\ldots,p_n\},\hspace{0.35cm}
p_+:=\max\{p_1,\ldots,p_n\}
\hspace{0.35cm}{\rm and}\hspace{0.35cm}
\underline{p}:=\min\{p_-,1\}.
\end{align}

A $C^\infty(\rn)$ function $\varphi$ is called a \emph{Schwartz function} if,
for any $N\in\zz_+$ and multi-index $\az\in\zz_+^n$,
$$\|\varphi\|_{N,\alpha}:=
\sup_{x\in\rn}\lf\{(1+|x|)^N
|\partial^\alpha\varphi(x)|\r\}<\infty.$$
Denote by
$\cs(\rn)$ the set of all Schwartz functions, equipped
with the topology determined by
$\{\|\cdot\|_{N,\alpha}\}_{N\in\zz_+,\az\in\zz_+^n}$,
and $\cs'(\rn)$ its \emph{dual space}, equipped with the weak-$\ast$ topology.
For any $N\in\mathbb{Z}_+$, let
$$\cs_N(\rn):=\lf\{\varphi\in\cs(\rn):\
\|\varphi\|_{\cs_N(\rn)}:=
\sup_{x\in\rn}\lf[\langle x\rg_{\va}^N\sup_{|\az|\le N}
|\partial^\alpha\varphi(x)|\r]\le 1\r\}.$$
In what follows, for any $\varphi \in \cs(\rn)$ and $t\in (0,\fz)$,
let $\varphi_t(\cdot):=t^{-\nu}\varphi(t^{-\va}\cdot)$.

\begin{definition}\label{2d4}
Let $\phi\in\cs(\rn)$ and $f\in\cs'(\rn)$. The
\emph{non-tangential maximal function} $M_\phi(f)$,
with respect to $\phi$, is defined by setting, for any $x\in\rn$,
$$
M_\phi(f)(x):= \sup_{y\in B_{\va}(x,t),
t\in (0,\fz)}|f\ast\phi_t(y)|.
$$
Moreover, for any given $N\in\mathbb{N}$, the
\emph{non-tangential grand maximal function} $M_N(f)$ of
$f\in\cs'(\rn)$ is defined by setting, for any $x\in\rn$,
\begin{equation*}
M_N(f)(x):=\sup_{\phi\in\cs_N(\rn)}
M_\phi(f)(x).
\end{equation*}
\end{definition}

The following anisotropic mixed-norm Hardy space was first introduced
in \cite[Definition 3.3]{cgn17}.

\begin{definition}\label{2d5}
Let $\va\in[1,\fz)^n,\,\vp\in(0,\fz)^n$,
$N_{\vp}:=\lfloor\nu\frac{a_+}{a_-}
(\f{1}{\underline{p}}+1)+\nu+2a_+\rfloor+1$ and
\begin{align}\label{2e11}
N\in\mathbb{N}\cap \lf[N_{\vp},\fz\r),
\end{align}
where $a_-,\,a_+$ are as in \eqref{2e9} and $\underline{p}$ is as in \eqref{2e10}.
The \emph{anisotropic mixed-norm Hardy space} $\vh$ is defined by setting
\begin{equation*}
\vh:=\lf\{f\in\cs'(\rn):\ M_N(f)\in\lv\r\}
\end{equation*}
and, for any $f\in\vh$, let
$\|f\|_{\vh}:=\| M_N(f)\|_{\lv}$.
\end{definition}

\begin{remark}\label{2r3}
\begin{enumerate}
\item[{\rm (i)}] When
$\va:=(\overbrace{1,\ldots,1}^{n\ \rm times})$
and $\vp:=(\overbrace{p,\ldots,p}^{n\ \rm times})$, where $p\in(0,\fz)$,
then, by Remark \ref{2r2}, we know that $\vh$ coincides with the classical
isotropic Hardy space $H^p(\rn)$ of Fefferman and Stein \cite{fs72}.

\item[{\rm (ii)}] The quasi-norm of $\vh$ in Definition \ref{2d5} depends on $N$, however,
the space $\vh$ is independent of the choice of $N$ as long as $N$ is as in \eqref{2e11}
(see \cite[Remark 2.12]{hlyy}).
\end{enumerate}
\end{remark}

\section{Dual space of $\vh$\label{s3}}

Let $\va\in [1,\fz)^n$ and $\vp\in(0,1]^n$.
In this section, we prove that the dual space of $\vh$ is the anisotropic mixed-norm
Campanato space $\mathcal{L}^{\va}_{\vec{p},\,r',\,s}(\mathbb{R}^n)$ with
$r\in(1,\fz]$ and $s$ as in \eqref{3e1} below.
To this end, we first introduce the anisotropic mixed-norm
Campanato space $\lq$.
In what follows, for any given $s\in \mathbb{Z}_+$,
the \emph{symbol $\cp_s(\rn)$} denotes the linear space of all polynomials
on $\rn$ with degree not greater than $s$.

\begin{definition}\label{3d1'}
Let $\va\in [1,\fz)^n$, $\vp\in(0,\fz]^n,\,q\in[1,\fz]$ and $s\in\zz_+$.
The \emph{anisotropic mixed-norm Campanato space} $\lq$ is defined to be
the set of all measurable functions $g$ such that, when $q\in[1,\fz)$,
$$\|g\|_{\lq}:=\sup_{B\in\mathfrak{B}}\inf_{P\in\cp_s(\rn)}
\frac{|B|}{\|\chi_B\|_{\lv}}\lf[\frac1{|B|}\int_B\lf|g(x)-P(x)\r|^q\,dx\r]^{1/q}<\fz$$
and
$$\|g\|_{\mathcal{L}^{\va}_{\vec{p},\,\fz,\,s}(\rn)}
:=\sup_{B\in\mathfrak{B}}\inf_{P\in\cp_s(\rn)}
\frac{|B|}{\|\chi_B\|_{\lv}}\lf\|g-P\r\|_{L^{\fz}(B)}<\fz,$$
where $\mathfrak{B}$ is as in \eqref{2e2}.
\end{definition}

\begin{remark}\label{3r1}
\begin{enumerate}
\item[{\rm (i)}] It is easy to see that $\|\cdot\|_{\lq}$ is a seminorm and
$\cp_s(\rn)\st \lq$. Indeed, $\|g\|_{\lq}=0$ if and only if $g\in\cp_s(\rn)$.
Thus, if we identify $g_1$ with $g_2$ when $g_1-g_2\in\cp_s(\rn)$, then
$\lq$ becomes a Banach space. Throughout this article, we identify
$g\in\lq$ with $\{g+P:\, P\in\cp_s(\rn)\}$.
\item[{\rm (ii)}] When $\va:=(\overbrace{1,\ldots,1}^{n\ \rm times})$ and
$\vp:=(\overbrace{p,\ldots,p}^{n\ \rm times})$ with some $p\in(0,1]$, for any
$B\in\mathfrak{B}$, $\|\chi_B\|_{\lv}=|B|^{1/p}$. In this case, the
space $\lq$ is just the classical Campanato space $L_{\frac1p-1,\,q,\,s}(\rn)$
introduced by Campanato in \cite{c64}, which includes the classical
space $\mathop{\mathrm{BMO}}(\rn)$ of John and Nirenberg \cite{jn61} as a special case.
\end{enumerate}
\end{remark}

The following definitions of anisotropic mixed-norm
$(\vp,r,s)$-atoms, anisotropic mixed-norm atomic Hardy
spaces and anisotropic mixed-norm finite atomic Hardy
spaces are just \cite[Definitions 3.1, 3.2 and 5.1]{hlyy}, respectively.

\begin{definition}\label{3d1}
Let $\va\in [1,\fz)^n$, $\vp\in(0,\fz)^n$, $r\in (1,\fz]$ and
\begin{align}\label{3e1}
s\in\lf[\lf\lfloor\f{\nu}{a_-}\lf(\f{1}{p_-}-1\r) \r\rfloor,\fz\r)\cap\zz_+,
\end{align}
where $a_-$ and $p_-$ are, respectively, as in \eqref{2e9} and \eqref{2e10}.
An \emph{anisotropic mixed-norm $(\vp,r,s)$-atom} $a$ is
a measurable function on $\rn$ satisfying
\begin{enumerate}
\item[{\rm (i)}] $\supp a \st B$, where
$B\in\mathfrak{B}$ with $\mathfrak{B}$ as in \eqref{2e2};

\item[{\rm (ii)}] $\|a\|_{L^r(\rn)}\le \frac{|B|^{1/r}}{\|\chi_B\|_{\lv}}$;

\item[{\rm (iii)}] $\int_{\mathbb R^n}a(x)x^\az\,dx=0$ for any $\az\in\zz_+^n$
with $|\az|\le s$.
\end{enumerate}
\end{definition}

\begin{definition}\label{3d2}
Let $\va$, $\vp$, $r$
and $s$ be as in Definition \ref{3d1}. The \emph{anisotropic mixed-norm
atomic Hardy space} $\vah$ is defined to be the
set of all $f\in\cs'(\rn)$ satisfying that there exist
$\{\lz_i\}_{i\in\nn}\st\mathbb{C}$
and a sequence of $(\vp,r,s)$-atoms, $\{a_i\}_{i\in\nn}$,
supported, respectively, on
$\{B_i\}_{i\in\nn}\st\mathfrak{B}$ such that
\begin{align}\label{3e14}
f=\sum_{i\in\nn}\lz_ia_i
\quad\mathrm{in}\quad\cs'(\rn).
\end{align}
Moreover, for any $f\in\vah$, let
\begin{align*}
\|f\|_{\vah}:=
{\inf}\lf\|\lf\{\sum_{i\in\nn}
\lf[\frac{|\lz_i|\chi_{B_i}}{\|\chi_{B_i}\|_{\lv}}\r]^
{\underline{p}}\r\}^{1/{\underline{p}}}\r\|_{\lv},
\end{align*}
where $\underline{p}$ is as in \eqref{2e10} and the infimum is taken
over all decompositions of $f$ as in \eqref{3e14}.
\end{definition}

\begin{definition}\label{5d1}
Let $\va$, $\vp$, $r$
and $s$ be as in Definition \ref{3d1}. The \emph{anisotropic mixed-norm
finite atomic Hardy space} $\vfah$ is defined to be the set of all
$f\in\cs'(\rn)$ satisfying that there exist $I\in\nn$,
$\{\lz_i\}_{i\in[1,I]\cap\nn}\st\mathbb{C}$ and
a finite sequence of $(\vp,r,s)$-atoms,
$\{a_i\}_{i\in[1,I]\cap\nn}$, supported, respectively, on
$\{B_i\}_{i\in[1,I]\cap\nn}\st\mathfrak{B}$
such that
\begin{align}\label{3e15}
f=\sum_{i=1}^I\lambda_ia_i
\quad\mathrm{in}\quad\cs'(\rn).
\end{align}
Moreover, for any $f\in\vfah$, let
\begin{align*}
\|f\|_{\vfah}:=
{\inf}\lf\|\lf\{\sum_{i=1}^{I}
\lf[\frac{|\lz_i|\chi_{B_i}}{\|\chi_{B_i}\|_{\lv}}\r]^
{\underline{p}}\r\}^{1/\underline{p}}\r\|_{\lv},
\end{align*}
where $\underline{p}$ is as in \eqref{2e10} and the
infimum is taken over all decompositions of $f$ as in \eqref{3e15}.
\end{definition}

To establish the duality theory of $\vh$, we need the following atomic and
finite atomic characterizations of $\vh$,
which are just \cite[Theorems 3.15 and 5.9]{hlyy}, respectively.

\begin{lemma}\label{3l1}
Let $\va$, $\vp$
and $s$ be as in Definition \ref{3d1}, $r\in(\max\{p_+,1\},\fz]$
with $p_+$ as in \eqref{2e10} and $N$ be as in \eqref{2e11}.
Then $\vh=\vah$ with equivalent quasi-norms.
\end{lemma}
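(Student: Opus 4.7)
The plan is to establish the equivalence $\vh=\vah$ with equivalent quasi-norms via two continuous inclusions: $\vah\hookrightarrow\vh$ (atoms belong to the Hardy space with the correct normalization) and $\vh\hookrightarrow\vah$ (every distribution in the Hardy space admits an atomic decomposition).

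For the inclusion $\vah\hookrightarrow\vh$, I would first prove a uniform bound on the Hardy quasi-norm of any $(\vp,r,s)$-atom $a$ supported in $B\in\mathfrak{B}$. The pointwise estimate of $M_N(a)$ splits according to whether the argument lies in a fixed anisotropic dilate $cB$ (with $c>1$) or in $(cB)^\com$. Inside $cB$: the grand maximal function is controlled by the anisotropic Hardy--Littlewood maximal function, whose $L^r$-boundedness is available because $r>\max\{p_+,1\}$; combining condition (ii) of Definition \ref{3d1} with an anisotropic H\"older inequality in the mixed-norm setting delivers the desired $\lv$-bound on $M_N(a)\chi_{cB}$. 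Outside $cB$: the vanishing-moment condition (iii), coupled with a Taylor expansion of the test function $\phi\in\cs_N(\rn)$ at the center of $B$, and the constraint $s\ge\lfloor(\nu/a_-)(1/p_--1)\rfloor$, yield anisotropic polynomial decay of $M_N(a)$ strong enough to render $M_N(a)\chi_{(cB)^\com}$ summable in $\lv$. Passing from a single atom to the decomposition $f=\sum_i\lz_ia_i$ then uses the $\underline{p}$-subadditivity of $\|\cdot\|_{\lv}^{\underline{p}}$ (see Remark \ref{2r2}) together with a vector-valued anisotropic maximal inequality adapted to mixed-norm spaces, producing $\|f\|_{\vh}\ls\|f\|_{\vah}$.

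For the reverse inclusion $\vh\hookrightarrow\vah$, I would implement an anisotropic Calder\'on--Zygmund decomposition at dyadic levels. For every $k\in\zz$ let $\Omega_k:=\{x\in\rn: M_N(f)(x)>2^k\}$, which is open because $M_N(f)$ is lower semicontinuous. A Whitney-type covering adapted to $\mathfrak{B}$ produces collections $\{B_{k,i}\}_{i\in\nn}$ with bounded overlap covering $\Omega_k$ and with comparable distance to $\Omega_k^\com$. Using a smooth partition of unity subordinate to a mild dilate of this covering, define localized pieces $b_{k,i}$ from $f\cdot\eta_{k,i}$ with the $\cp_s(\rn)$-component subtracted off to enforce the vanishing moments. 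The telescoping identity $f=\sum_{k\in\zz}(g_{k+1}-g_k)=\sum_{k,i}\lz_{k,i}a_{k,i}$ in $\cs'(\rn)$, with $g_k:=f-\sum_i b_{k,i}$ and $\lz_{k,i}\sim 2^k\|\chi_{B_{k,i}}\|_{\lv}$, then yields the atomic representation \eqref{3e14}. The convergence in $\cs'(\rn)$ is verified via uniform pointwise bounds on $b_{k,i}$, for which the lower bound $N\ge N_{\vp}$ from \eqref{2e11} is crucial. Finally, a layer-cake computation controls $\sum_k\lz_{k,i}^{\underline{p}}\chi_{B_{k,i}}/\|\chi_{B_{k,i}}\|_{\lv}^{\underline{p}}$ pointwise by a power of the Hardy--Littlewood maximal function of $\chi_{\Omega_k}$, which delivers $\|f\|_{\vah}\ls\|f\|_{\vh}$ after using the vector-valued anisotropic maximal inequality on $\lv$.

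The principal obstacle is porting the Calder\'on--Zygmund machinery to the anisotropic mixed-norm setting. Three points demand care: first, producing the Whitney-type covering for open sets using anisotropic balls with uniformly bounded overlap, which leans on the quasi-triangle property \eqref{2e1} of $|\cdot|_{\va}$ with constant $1$; second, simultaneously controlling $b_{k,i}$ and its polynomial-subtracted version in $L^\infty$, where the degree-$s$ anisotropic Taylor polynomial requires precisely $s\ge\lfloor(\nu/a_-)(1/p_--1)\rfloor$ to counterbalance the growth in $\vp$; and third, converting distribution-function information about $M_N(f)$ into $\lv$-control of $\|\chi_{\Omega_k}\|_{\lv}$, where the usual layer-cake identity fails in the mixed-norm setting and must be replaced by a genuinely $n$-variable vector-valued maximal inequality.
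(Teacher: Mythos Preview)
The paper does not prove this lemma; it simply cites it verbatim as \cite[Theorem 3.15]{hlyy}, so there is no argument in the present paper to compare against. Your sketch is the standard route and matches in outline what one finds in \cite{hlyy}: a uniform $\lv$-bound on $M_N(a)$ for a single atom via a near/far splitting (maximal inequality plus size condition on $cB$, vanishing moments plus Taylor expansion on $(cB)^\com$), followed by a vector-valued Fefferman--Stein maximal inequality to sum; and, for the converse, an anisotropic Calder\'on--Zygmund decomposition at levels $2^k$ with Whitney covers of the superlevel sets of $M_N(f)$, a smooth partition of unity, polynomial corrections of degree $s$, and control of the atomic quasi-norm through the same maximal inequality. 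The three ``obstacles'' you flag are exactly the points where \cite{hlyy} does the work, and the role of the threshold $s\ge\lfloor(\nu/a_-)(1/p_--1)\rfloor$ and of $N\ge N_{\vp}$ is identified correctly. In short, your proposal is a faithful summary of the proof strategy in \cite{hlyy}; the present paper itself contains nothing further to compare.
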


\begin{lemma}\label{3l2}
Let $\va$, $\vp$, $r$
and $s$ be as in Lemma \ref{3l1} and
$C(\rn)$ denote the set of all continuous functions on $\rn$.
\begin{enumerate}
\item[{\rm (i)}]
If $r\in(\max\{p_+,1\},\fz)$, then $\|\cdot\|_{\vfah}$
and $\|\cdot\|_{\vh}$ are equivalent quasi-norms on $\vfah$;

\item[{\rm (ii)}]
$\|\cdot\|_{\vfahfz}$
and $\|\cdot\|_{\vh}$ are equivalent quasi-norms on
$\vfahfz\cap C(\rn)$.
\end{enumerate}
\end{lemma}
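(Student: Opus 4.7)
The one-sided estimate $\|f\|_{\vh}\ls\|f\|_{\vfah}$ is essentially for free: any finite representation $f=\sum_{i=1}^I\lambda_i a_i$ is, in particular, an atomic representation in the sense of Definition \ref{3d2}, so $\|f\|_{\vah}\le\|f\|_{\vfah}$, and Lemma \ref{3l1} then supplies $\|f\|_{\vh}\sim\|f\|_{\vah}$. The real content of the lemma is the reverse bound $\|f\|_{\vfah}\ls\|f\|_{\vh}$, and the plan is to construct a \emph{finite} atomic decomposition of $f$ whose finite-atomic norm is controlled by $\|f\|_{\vh}$, in the spirit of Meyer--Taibleson--Weiss and Bownik.

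I would start from the (in general infinite) atomic decomposition produced in the proof of Lemma \ref{3l1}. That decomposition is itself a Calder\'on--Zygmund--type procedure carried out at the anisotropic dyadic level sets $\Omega_k:=\{x\in\rn:\ M_N(f)(x)>2^k\}$, $k\in\zz$, yielding $f=\sum_{k\in\zz}\sum_j\lambda_{k,j}a_{k,j}$ in $\cs'(\rn)$ with $(\vp,r,s)$-atoms $a_{k,j}$ supported in anisotropic balls $B_{k,j}\in\mathfrak{B}$ together with the quasi-norm bound
\[
\lf\|\lf\{\sum_{k,j}\lf[\f{|\lambda_{k,j}|\chi_{B_{k,j}}}{\|\chi_{B_{k,j}}\|_{\lv}}\r]^{\underline{p}}\r\}^{1/\underline{p}}\r\|_{\lv}\ls\|f\|_{\vh}.
\]
The hypothesis $f\in\vfah$ is what lets me turn this infinite sum into a finite one: $f$ is then supported in some fixed anisotropic ball $B^\ast\in\mathfrak{B}$, lies in $L^r(\rn)$ (or, in part (ii), is bounded and continuous), and inherits vanishing moments up to order $s$ from its original finite atomic representation.

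Compact support together with $L^r$-membership of $f$ forces $M_N(f)$ to be bounded and to decay at infinity, so there exist integers $k_-\ll0\ll k_+$ such that $\Omega_k=\emptyset$ for every $k\ge k_+$, while for $k\le k_-$ every Whitney-type ball $B_{k,j}$ produced by the construction is large enough to contain $B^\ast$. The upper cutoff immediately truncates the $k$-sum to $k<k_+$. For the lower tail, I would collapse all contributions with $k\le k_-$ into a single function $\widetilde a$; combining the vanishing moments of $f$ with those of the individual $a_{k,j}$ and performing a direct $L^r$ size estimate on a ball of comparable scale to $B^\ast$ shows that $\widetilde a$ is a constant multiple of a $(\vp,r,s)$-atom whose coefficient is dominated by $\|f\|_{\vh}$. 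Collecting the finitely many surviving terms then yields a representation $f=\sum_{i=1}^{I'}\mu_i\widetilde a_i$ in $\cs'(\rn)$ with the required finite-atomic norm bound.

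The main obstacle I anticipate lies in part (ii), where each surviving atom must additionally belong to $C(\rn)$ when $f$ does. The usual Calder\'on--Zygmund bump functions adapted to the Whitney balls can be taken $C^\infty$ via a smooth anisotropic partition of unity, so each $a_{k,j}$ is continuous; but the lumped atom $\widetilde a$ is only a uniform limit of finitely many such $a_{k,j}$, and one must verify that the telescoping partial sums converge uniformly on compacta so that continuity is preserved under the collapse. Once this uniform-convergence step is in place, the same finiteness reduction delivers the equivalence of $\|\cdot\|_{\vfahfz}$ and $\|\cdot\|_{\vh}$ on $\vfahfz\cap C(\rn)$.
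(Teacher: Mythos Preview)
The paper does not supply its own proof of this lemma; it is quoted verbatim as \cite[Theorem 5.9]{hlyy}, so there is no in-paper argument to compare against. Your outline follows the standard strategy (in the spirit of Meda--Sj\"ogren--Vallarino and \cite{mb03}) that one expects the cited proof to use, and the overall architecture---infinite Calder\'on--Zygmund atomic expansion, lower truncation collapsed into a single atom, upper truncation---is correct.

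There is, however, a concrete error in your treatment of part (i). You assert that compact support together with $L^r$-membership of $f$ (with $r<\infty$) forces $M_N(f)$ to be bounded, so that $\Omega_k=\emptyset$ for all $k\ge k_+$. This is false: a compactly supported $L^r$ function can have unbounded grand maximal function. The correct upper truncation does not use emptiness but \emph{smallness in $L^r$}: since $f\in L^r(\rn)$ and $r>1$, one has $M_N(f)\in L^r(\rn)$, and the tail $\sum_{k\ge K}\sum_j\lambda_{k,j}a_{k,j}$ tends to $0$ in $L^r(\rn)$ as $K\to\infty$; thus for $K$ large this entire tail, supported in a fixed dilate of $B^\ast$ and inheriting the vanishing moments, is a small multiple of a single $(\vp,r,s)$-atom. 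The same issue recurs at each surviving level $k$: even though $\Omega_k$ is bounded, its Whitney covering is typically infinite, and you again need the $L^r$-smallness argument (not a counting argument) to truncate the sum in $j$ to finitely many terms. Your sketch of part (ii) is essentially correct: there $f\in L^\infty(\rn)$ genuinely gives $M_N(f)\in L^\infty(\rn)$, so the emptiness-at-the-top argument works, and the uniform-convergence issue you flag is indeed the place where continuity of the lumped atom must be verified.
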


Via borrowing some ideas from the proofs of \cite[Lemma 5.9]{zsy16}
and \cite[p.\,51, Lemma 8.2]{mb03}, respectively, we obtain the following two lemmas.

\begin{lemma}\label{3l3}
Let $\vp\in (0,1]^n$. Then, for any $\{\lz_i\}_{i\in\nn}\st\mathbb{C}$
and $\{B_i\}_{i\in\nn}\st\mathfrak{B}$,
$$\sum_{i\in\nn}|\lz_i|\le \lf\|\lf\{\sum_{i\in\nn}
\lf[\frac{|\lz_i|\chi_{B_i}}{\|\chi_{B_i}\|_{\lv}}\r]^
{\underline{p}}\r\}^{1/{\underline{p}}}\r\|_{\lv},$$
where $\underline{p}$ is as in \eqref{2e10}.
\end{lemma}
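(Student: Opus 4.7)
The plan is to combine two ingredients: a pointwise comparison between the $\underline p$-th power aggregate and the bare sum, and a reverse Minkowski inequality in $\lv$ valid for $\vp\in(0,1]^n$.

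First, since $\vp\in(0,1]^n$ one has $\underline{p}=p_-\in(0,1]$, so $t\mapsto t^{\underline p}$ is concave on $[0,\fz)$. An elementary consequence (obtained by rescaling $b_i\mapsto b_i/\sum_j b_j$ and using $x^{\underline p}\ge x$ for $x\in[0,1]$) is that
\[
\lf(\sum_{i\in\nn}b_i^{\underline p}\r)^{1/\underline p}\ge \sum_{i\in\nn}b_i
\]
for every non-negative sequence $\{b_i\}_{i\in\nn}$. Applying this pointwise with $b_i(x)=|\lz_i|\chi_{B_i}(x)/\|\chi_{B_i}\|_{\lv}$, the function on the right-hand side of the target inequality dominates $\sum_{i\in\nn}|\lz_i|\chi_{B_i}/\|\chi_{B_i}\|_{\lv}$ almost everywhere. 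By the monotonicity of the quasi-norm $\|\cdot\|_{\lv}$, the problem reduces to showing
\[
\lf\|\sum_{i\in\nn}\f{|\lz_i|\chi_{B_i}}{\|\chi_{B_i}\|_{\lv}}\r\|_{\lv}\ge \sum_{i\in\nn}|\lz_i|,
\]
where the $i$-th summand has $\lv$-quasi-norm exactly $|\lz_i|$.

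The remaining bound is precisely the \emph{reverse Minkowski inequality} for $\lv$ with $\vp\in(0,1]^n$: for all non-negative measurable $f,g$,
\[
\|f+g\|_{\lv}\ge \|f\|_{\lv}+\|g\|_{\lv}.
\]
The one-dimensional case $\|u+v\|_{L^p(\rr)}\ge \|u\|_{L^p(\rr)}+\|v\|_{L^p(\rr)}$ for $p\in(0,1]$ and non-negative $u,v$ is the classical Hardy--Littlewood--P\'olya reverse Minkowski inequality; a short proof writes
\[
\int (u+v)^p\,dx=\int u(u+v)^{p-1}\,dx+\int v(u+v)^{p-1}\,dx
\]
and applies the reverse H\"older inequality with conjugate exponent $p/(p-1)<0$ to each term. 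To pass to the mixed norm, I would iterate the inequality through the $n$ integration variables: apply the one-dimensional bound in $x_1$ to obtain a pointwise (in $(x_2,\ldots,x_n)$) inequality between non-negative functions, then use the monotonicity of $\|\cdot\|_{L^{p_2}(\rr)}$ together with a second reverse Minkowski in $x_2$, and so on through $x_n$. The passage from pairs to countable sums is routine by finite induction together with monotone convergence applied in each integration variable.

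The main technical point is the iterated reverse Minkowski step: at each stage one must check that the partial norms obtained are non-negative measurable functions of the remaining variables, so that both the next application of reverse Minkowski and the monotonicity of the outer $L^{p_j}$-norm are legitimate, and one must use the base one-dimensional inequality uniformly for $p\in(0,1]$ (including the endpoint $p=1$, where it degenerates to equality). Once the mixed-norm reverse Minkowski is established, pairing it with the concavity-based pointwise lower bound above yields the asserted estimate.
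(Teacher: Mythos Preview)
Your proposal is correct and follows essentially the same route as the paper: a pointwise comparison $\bigl(\sum_i b_i^{\underline p}\bigr)^{1/\underline p}\ge \sum_i b_i$ followed by the reverse Minkowski inequality in $\lv$ for $\vp\in(0,1]^n$. The paper writes this more tersely---it normalizes by $\lambda:=\sum_i|\lambda_i|$ so the final sum collapses to~$1$ and simply invokes ``the fact that $\vp\in(0,1]^n$'' for the reverse Minkowski step---whereas you spell out the iterated one-variable argument; the substance is the same.
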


\begin{proof}
Let $\lz:=\sum_{i\in\nn}|\lz_i|$. Notice that,
for any $\{\lz_i\}_{i\in\nn}\st\mathbb{C}$ and $\theta\in(0,1]$,
$$\lf[\sum_{i\in\nn}|\lz_i|\r]^{\theta}\le \sum_{i\in\nn}|\lz_i|^{\theta}.$$
By the fact that $\vp\in (0,1]^n$ and \eqref{2e10}, we find that
\begin{align*}
\lf\|\lf\{\sum_{i\in\nn}
\lf[\frac{|\lz_i|\chi_{B_i}}{\lz\|\chi_{B_i}\|_{\lv}}\r]^
{\underline{p}}\r\}^{1/{\underline{p}}}\r\|_{\lv}
\ge\lf\|\sum_{i\in\nn}
\frac{|\lz_i|\chi_{B_i}}{\lz\|\chi_{B_i}\|_{\lv}}\r\|_{\lv}
\ge\sum_{i\in\nn}\f{|\lz_i|}{\lz}\lf\|
\frac{\chi_{B_i}}{\|\chi_{B_i}\|_{\lv}}\r\|_{\lv}=1,
\end{align*}
which implies the desired conclusion and hence completes the proof of Lemma \ref{3l3}.
\end{proof}

\begin{lemma}\label{3l4}
Let $\vp\in (0,1]^n$ and $\va$, $r$
and $s$ be as in Definition \ref{3d1}. Then, for any continuous
linear functional $L$ on $\vh=\vah$,
\begin{align}\label{3e3}
\lf\|L\r\|_{[\vah]^*}:=\sup\lf\{|L(f)|:\,\|f\|_{\vah}\le 1\r\}
=\sup\lf\{|L(a)|:\,a\ is\ any\ (\vp,r,s){\text-}atom\r\},
\end{align}
here and hereafter, $[\vah]^*$ denotes the dual space of $\vah$.
\end{lemma}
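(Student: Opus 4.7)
The plan is to establish the equality \eqref{3e3} by proving two inequalities. The easy direction $\sup\{|L(a)|\}\le\|L\|_{[\vah]^*}$ follows once I observe that any $(\vp,r,s)$-atom $a$ supported on a ball $B\in\mathfrak{B}$ is itself a trivial one-term atomic decomposition $a=1\cdot a$, so Definition \ref{3d2} yields
$$\|a\|_{\vah}\le\lf\|\frac{\chi_B}{\|\chi_B\|_{\lv}}\r\|_{\lv}=1,$$
whence $|L(a)|\le\|L\|_{[\vah]^*}\|a\|_{\vah}\le\|L\|_{[\vah]^*}$.

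For the reverse inequality $\|L\|_{[\vah]^*}\le\sup\{|L(a)|\}$, I would fix $\epsilon\in(0,\fz)$ and $f\in\vah$ with $\|f\|_{\vah}\le 1$. By Definition \ref{3d2}, choose an atomic decomposition $f=\sum_{i\in\nn}\lz_ia_i$ in $\cs'(\rn)$, with each $a_i$ being a $(\vp,r,s)$-atom supported on some $B_i\in\mathfrak{B}$, such that
$$\lf\|\lf\{\sum_{i\in\nn}\lf[\frac{|\lz_i|\chi_{B_i}}{\|\chi_{B_i}\|_{\lv}}\r]^{\underline{p}}\r\}^{1/\underline{p}}\r\|_{\lv}<1+\epsilon.$$
Setting $f_N:=\sum_{i=1}^N\lz_ia_i$, the crucial step is to show $f_N\to f$ in $\vah$. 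Indeed, since the full series converges to $f$ in $\cs'(\rn)$, the tail $f-f_N=\sum_{i>N}\lz_ia_i$ converges in $\cs'(\rn)$ and is itself an atomic decomposition of $f-f_N$, so
$$\|f-f_N\|_{\vah}\le\lf\|\lf\{\sum_{i>N}\lf[\frac{|\lz_i|\chi_{B_i}}{\|\chi_{B_i}\|_{\lv}}\r]^{\underline{p}}\r\}^{1/\underline{p}}\r\|_{\lv}.$$
The right-hand side tends to $0$ as $N\to\fz$ by the dominated convergence theorem in the mixed-norm Lebesgue space $\lv$: the integrands decrease to $0$ pointwise almost everywhere and are dominated by the $\lv$-function obtained from the full series, and iterated DCT in each coordinate applies because $\vp\in(0,1]^n$ has no infinite component.

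With this convergence in hand, the continuity of $L$ on $\vh=\vah$ (using Lemma \ref{3l1}) gives $L(f_N)\to L(f)$. On the other hand, for each $N\in\nn$,
$$|L(f_N)|\le\sum_{i=1}^N|\lz_i|\,|L(a_i)|\le\lf(\sum_{i\in\nn}|\lz_i|\r)\sup\lf\{|L(a)|:a\text{ is a }(\vp,r,s)\text{-atom}\r\},$$
and by Lemma \ref{3l3} the coefficient sum is bounded by $1+\epsilon$. Letting $N\to\fz$, then $\epsilon\to 0^+$, and finally taking supremum over all such $f$, yields $\|L\|_{[\vah]^*}\le\sup\{|L(a)|\}$ and completes the proof.

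The principal obstacle is the upgrade from $\cs'(\rn)$-convergence to $\vah$-quasi-norm convergence of the partial sums $f_N$, since only the former is built into the definition of the atomic Hardy space; the tail estimate above combined with mixed-norm dominated convergence is precisely what makes this upgrade go through. Once this is settled, Lemma \ref{3l3} converts the $\underline{p}$-quasi-norm control of the atomic coefficients into the $\ell^1$-control needed to apply the supremum bound term by term.
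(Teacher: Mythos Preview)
Your argument is correct and follows essentially the same route as the paper's proof: both directions use that atoms have $\vah$-norm at most $1$, and the reverse inequality is obtained by choosing a near-optimal atomic decomposition, invoking continuity of $L$, and controlling $\sum_{i}|\lz_i|$ via Lemma \ref{3l3}. The only difference is in justifying $L(f)=\sum_{i}\lz_iL(a_i)$: the paper cites the proof of \cite[Theorem 3.15]{hlyy} to obtain directly that $f=\sum_{i}\lz_ia_i$ converges in $\vh$, whereas you establish this convergence yourself via the tail estimate $\|f-f_N\|_{\vah}\le\|(\sum_{i>N}[\cdot]^{\underline p})^{1/\underline p}\|_{\lv}\to0$ and iterated dominated convergence in $\lv$; this makes your argument slightly more self-contained but is otherwise the same proof.
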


\begin{proof}
For any $(\vp,r,s)$-atom $a$, we easily know that $\|a\|_{\vah}\le 1$.
Thus,
\begin{align}\label{3e2}
\sup\lf\{|L(a)|:\,a\ {\rm is\ any}\ (\vp,r,s){\text-}{\rm atom}\r\}
\le \sup\lf\{|L(f)|:\,\|f\|_{\vah}\le 1\r\}.
\end{align}

Conversely, let $f\in\vh$ and $\|f\|_{\vah}\le 1$.
Then, for any $\varepsilon\in(0,\fz)$, by an argument similar to that used
in the proof of \cite[Theorem 3.15]{hlyy},
we conclude that there exist $\{\lz_i\}_{i\in\nn}\st\mathbb{C}$
and a sequence of $(\vp,r,s)$-atoms, $\{a_i\}_{i\in\nn}$,
supported, respectively, on
$\{B_i\}_{i\in\nn}\st\mathfrak{B}$ such that
$$
f=\sum_{i\in\nn}\lz_ia_i
~~{\rm in}~~\vh
\quad {\rm and}\quad
\lf\|\lf\{\sum_{i\in\nn}
\lf[\frac{|\lz_i|\chi_{B_i}}{\|\chi_{B_i}\|_{\lv}}\r]^
{\underline{p}}\r\}^{1/{\underline{p}}}\r\|_{\lv}\le 1+\varepsilon.
$$
From this, the continuity of $L$ and Lemma \ref{3l3},
we further deduce that
\begin{align*}
|L(f)|&\le \sum_{i\in\nn}|\lz_i||L(a_i)|
\le\lf[\sum_{i\in\nn}|\lz_i|\r]
\sup\lf\{|L(a)|:\,a\ {\rm is\ any}\ (\vp,r,s){\text-}{\rm atom}\r\}\\
&\le(1+\varepsilon)\sup\lf\{|L(a)|:\,a\ {\rm is\ any}\ (\vp,r,s){\text-}{\rm atom}\r\},
\end{align*}
which, combined with the arbitrariness of $\varepsilon\in(0,\fz)$
and \eqref{3e2}, implies that \eqref{3e3} holds true.
This finishes the proof of Lemma \ref{3l4}.
\end{proof}

The main result of this section is stated as follows.

\begin{theorem}\label{3t1}
Let $\va,\,\vp,\,r$ and $s$ be as in Lemma \ref{3l4}.
Then the dual space of $\vh$, denoted by $[\vh]^*$, is $\lr$ in the following sense:
\begin{enumerate}
\item[{\rm (i)}] Suppose that $g\in\lr$. Then the linear functional
$$L_g:\,f\longmapsto L_g(f):=\int_{\rn}f(x)g(x)\,dx,$$
initially defined for any $f\in\vfah$ has a bounded extension to $\vh$.

\item[{\rm (ii)}] Conversely, any continuous linear functional on $\vh$
arises as in (i) with a unique $g\in\lr$.
\end{enumerate}
Moreover, $\|g\|_{\lr}\sim \|L_g\|_{[\vh]^*}$, where the implicit equivalent
positive constants are independent of $g$.
\end{theorem}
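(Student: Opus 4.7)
The plan is to prove the two inclusions $\lr\hookrightarrow[\vh]^*$ and $[\vh]^*\hookrightarrow\lr$ separately, following the Taibleson--Weiss template adapted to the anisotropic mixed-norm setting. The main ingredients are the atomic and finite atomic characterizations of $\vh$ (Lemmas \ref{3l1} and \ref{3l2}), the reduction of the functional norm to a supremum over atoms (Lemma \ref{3l4}), and a local Hahn--Banach construction with a gluing step.

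For $\lr\hookrightarrow[\vh]^*$, I define $L_g(f):=\int_\rn f\,g$ initially on $\vfah$ and control it on a single $(\vp,r,s)$-atom $a$ supported on a ball $B\in\mathfrak{B}$. Using the vanishing moments of $a$ to replace $g$ by $g-P$ for an arbitrary $P\in\cp_s(\rn)$, then applying H\"older's inequality and the size bound in Definition \ref{3d1}(ii), I obtain
\[
|L_g(a)|=\left|\int_B a(x)[g(x)-P(x)]\,dx\right|\le\|a\|_{L^r(B)}\|g-P\|_{L^{r'}(B)}\le\frac{|B|^{1/r}}{\|\chi_B\|_{\lv}}\|g-P\|_{L^{r'}(B)}.
\]
Taking the infimum over $P$ and recognizing the Campanato norm yields $|L_g(a)|\le\|g\|_{\lr}$. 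Lemma \ref{3l4} promotes this to $\|L_g\|_{[\vah]^*}\le\|g\|_{\lr}$, and Lemma \ref{3l2} lets me extend $L_g$ by continuity from $\vfah$ to all of $\vh$.

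For the reverse direction $[\vh]^*\hookrightarrow\lr$, I proceed by a Hahn--Banach gluing argument. For each $B\in\mathfrak{B}$, let $L^r_{s,0}(B)$ denote the closed subspace of $L^r(B)$ consisting of functions with vanishing moments up to order $s$. Every nonzero $f\in L^r_{s,0}(B)$ rescales to a $(\vp,r,s)$-atom, so
\[
|L(f)|\le\|L\|_{[\vah]^*}\,\frac{\|\chi_B\|_{\lv}}{|B|^{1/r}}\,\|f\|_{L^r(B)}.
\]
Extending $L|_{L^r_{s,0}(B)}$ to $L^r(B)$ with the same norm via Hahn--Banach and invoking the duality $(L^r(B))^*=L^{r'}(B)$ (for $r$ finite) produces a representative $g_B\in L^{r'}(B)$ with $\|g_B\|_{L^{r'}(B)}\le\|L\|_{[\vah]^*}\,\|\chi_B\|_{\lv}|B|^{-1/r}$. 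Taking a nested exhaustion $B^{(1)}\subset B^{(2)}\subset\cdots$ of $\rn$ by balls in $\mathfrak{B}$ and correcting $g_{B^{(k+1)}}$ inductively by elements of $\cp_s(\rn)$ so that it agrees with $g_{B^{(k)}}$ on $B^{(k)}$ (permissible because any two representatives on $B^{(k)}$ differ by a polynomial of degree at most $s$), I assemble a global function $g$. For every $B\subset B^{(k)}$, $g|_B$ differs from $g_B$ by some $P_B\in\cp_s(\rn)$, giving
\[
\frac{|B|}{\|\chi_B\|_{\lv}}\left[\frac{1}{|B|}\int_B|g-P_B|^{r'}\,dx\right]^{1/r'}\le\|L\|_{[\vah]^*},
\]
hence $\|g\|_{\lr}\le\|L\|_{[\vah]^*}$. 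By construction $L_g(a)=L(a)$ on every $(\vp,r,s)$-atom, so Lemma \ref{3l2} forces $L=L_g$ on $\vh$; uniqueness of $g$ modulo $\cp_s(\rn)$ is immediate, since $L_g\equiv0$ forces $g$ to annihilate every atom on every ball and thus to lie in $\cp_s(\rn)$ locally.

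The main obstacle is the gluing step, which rests on the classical Campanato fact that any $L^{r'}(B)$-function annihilating $L^r_{s,0}(B)$ in the duality pairing must coincide almost everywhere on $B$ with a polynomial of degree at most $s$; this same fact underlies the uniqueness clause. A secondary technical point is the endpoint $r=\infty$, where $(L^\infty(B))^*\neq L^1(B)$ and the direct duality used to produce $g_B$ fails. I would dispose of this case by running the Hahn--Banach construction with a finite $r_0\in(\max\{p_+,1\},\infty)$ instead---permissible since, by Lemma \ref{3l1}, the atomic Hardy space with parameter $r_0$ still coincides with $\vh$---thereby obtaining $g\in\mathcal{L}^{\va}_{\vp,r_0',s}(\rn)$, and then applying H\"older's inequality on each ball to deduce $g\in\mathcal{L}^{\va}_{\vp,1,s}(\rn)$, which is precisely $\lr$ with $r=\infty$. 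Combining the two directions gives the norm equivalence $\|g\|_{\lr}\sim\|L_g\|_{[\vh]^*}$.
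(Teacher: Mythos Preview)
Your proposal is correct and follows essentially the same Taibleson--Weiss template as the paper's proof: the atom-by-atom H\"older estimate combined with Lemma~\ref{3l4} (which the paper unwinds via Lemma~\ref{3l3}) for part~(i), and the local Hahn--Banach representation on $L^r_{s,0}(B)$ followed by polynomial gluing along an exhaustion for part~(ii). The only notable deviation is at the endpoint $r=\infty$: the paper embeds $L^\infty_0(B)\subset L^{\tilde r}(B)$ locally to extract an $L^1(B)$ representative directly, whereas you run the entire construction with a finite auxiliary $r_0$ and then use the H\"older embedding $\mathcal{L}^{\va}_{\vp,r_0',s}(\rn)\hookrightarrow\mathcal{L}^{\va}_{\vp,1,s}(\rn)$; both routes are valid.
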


\begin{remark}\label{3r2}
\begin{enumerate}
\item[{\rm (i)}]
When $\va$ and $\vp$ are as in Remark \ref{3r1}(ii),
$\vh$ and $\lr$ become, respectively, the classical Hardy space $H^p(\rn)$
and Campanato space $L_{\frac1p-1,\,r',\,s}(\rn)$ (see \cite{c64}).
In this case,
Theorem \ref{3t1} was proved by Taibleson and Weiss \cite{tw80}, which includes
the famous duality result of Fefferman and Stein \cite{fs72}, namely,
$[H^1(\rn)]^*={\mathop{\mathrm{BMO}}}(\rn)$, as a special case.
\item[{\rm (ii)}] We should point out that, when $\va$ is as in Remark \ref{3r1}(ii),
the space $\vh$ is just the isotropic mixed-norm Hardy space. Even in this
case, Theorem \ref{3t1} is also new.
\item[{\rm (iii)}]
When $\vp\in(1,\fz)^n$, it was proved in \cite[Theorem 6.1]{cgn17} that
$\vh=\lv$ with equivalent quasi-norms. This, together with
\cite[p.\,304, Theorem 1.a)]{bp61}, implies that, for any $\vp\in(1,\fz)^n$,
$L^{\vp'}(\rn)$ is the dual space of $\vh$.
However, when
$\vp:=(p_1,\ldots,p_n)\in(0,\fz)^n$ with $p_{i_0}\in(0,1]$ and $p_{j_0}\in(1,\fz)$
for some $i_0,j_0\in\{1,\ldots,n\}$, the dual space of $\vh$ is still unknown so far.
\end{enumerate}
\end{remark}

As an immediate corollary of Theorem \ref{3t1}, we have the following
equivalence of the spaces $\lq$, the details being omitted.

\begin{corollary}
Let $\va,\,\vp$ and $s$ be as in Theorem \ref{3t1} and $q\in[1,\fz)$.
Then $\mathcal{L}^{\va}_{\vec{p},\,1,\,s}(\rn)=\lq$ with equivalent quasi-norms.
\end{corollary}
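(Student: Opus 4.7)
The plan is to read off the corollary directly from Theorem \ref{3t1}, using that the dual space $[\vh]^*$ is a single Banach space while Theorem \ref{3t1} simultaneously identifies it, up to equivalence of norms, with $\mathcal{L}^{\va}_{\vec{p},\,r',\,s}(\rn)$ for \emph{every} admissible choice of $r\in(1,\fz]$.

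First I would apply Theorem \ref{3t1} with $r=\fz$, so that $r'=1$; this delivers the bijection $g\longmapsto L_g$ from $\mathcal{L}^{\va}_{\vec{p},\,1,\,s}(\rn)$ (elements identified modulo $\cp_s(\rn)$ as in Remark \ref{3r1}(i)) onto $[\vh]^*$, together with the two-sided estimate $\|g\|_{\mathcal{L}^{\va}_{\vec{p},\,1,\,s}(\rn)}\sim\|L_g\|_{[\vh]^*}$. The case $q=1$ of the corollary is then tautological, so I may assume $q\in(1,\fz)$.

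For such $q$, I would set $r:=q'\in(1,\fz)$, which still satisfies $r\in(1,\fz]$, and apply Theorem \ref{3t1} a second time. Because $r'=q$, this gives the analogous isomorphism $g\longmapsto L_g$ from $\lq$ onto $[\vh]^*$ with $\|g\|_{\lq}\sim\|L_g\|_{[\vh]^*}$. The key observation is that in both applications of Theorem \ref{3t1} the pairing is the \emph{same}, namely integration against $g$ first defined on $\vfah$ and then extended by continuity to $\vh$. Thus, given $g\in\mathcal{L}^{\va}_{\vec{p},\,1,\,s}(\rn)$, the bounded functional $L_g$ on $\vh$ is represented, by part (ii) of Theorem \ref{3t1} applied with $r=q'$, by some unique $\widetilde g\in\lq$ modulo $\cp_s(\rn)$. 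Testing $L_g-L_{\widetilde g}=0$ on arbitrary $(\vp,\fz,s)$-atoms and using the same uniqueness clause forces $g-\widetilde g\in\cp_s(\rn)$, so under the identification modulo polynomials of degree at most $s$ I may take $g=\widetilde g$ and conclude $g\in\lq$; the reverse inclusion $\lq\st\mathcal{L}^{\va}_{\vec{p},\,1,\,s}(\rn)$ is entirely symmetric.

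Chaining the two norm equivalences through the common space $[\vh]^*$ then yields, for every representative $g$,
\begin{equation*}
\|g\|_{\mathcal{L}^{\va}_{\vec{p},\,1,\,s}(\rn)}\sim\|L_g\|_{[\vh]^*}\sim\|g\|_{\lq},
\end{equation*}
which is exactly the claimed equality of $\mathcal{L}^{\va}_{\vec{p},\,1,\,s}(\rn)$ and $\lq$ with equivalent quasi-norms. No genuine obstacle arises; the only small care needed is the systematic identification of Campanato representatives modulo $\cp_s(\rn)$ from Remark \ref{3r1}(i), which ensures that the map $g\mapsto L_g$ is truly injective on equivalence classes and so Theorem \ref{3t1} may be inverted to transfer norm control between different integrability indices.
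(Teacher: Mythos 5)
Your argument is correct and is exactly the intended one: the paper itself states the corollary as an immediate consequence of Theorem \ref{3t1} with the details omitted, and your chaining of the two norm equivalences $\|g\|_{\mathcal{L}^{\va}_{\vec{p},\,1,\,s}(\rn)}\sim\|L_g\|_{[\vh]^*}\sim\|g\|_{\lq}$ obtained from the choices $r=\fz$ and $r=q'$, together with the uniqueness clause in Theorem \ref{3t1}(ii) to identify the two representatives modulo $\cp_s(\rn)$, is precisely that omitted argument. (As a minor remark, the inclusion $\lq\st\mathcal{L}^{\va}_{\vec{p},\,1,\,s}(\rn)$ with $\|g\|_{\mathcal{L}^{\va}_{\vec{p},\,1,\,s}(\rn)}\le\|g\|_{\lq}$ also follows directly from the H\"older inequality, so only the reverse direction truly needs the duality.)
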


Now we prove Theorem \ref{3t1}.

\begin{proof}[Proof of Theorem \ref{3t1}]
By Lemma \ref{3l1}, to prove $\lr\st [\vh]^*$, it suffices to show
$$\lr\st [\vah]^*.$$
To this end, let $g\in \lr$ and $a$ be a $(\vp,r,s)$-atom supported on $B\st\mathfrak{B}$.
Then, from Definition \ref{3d1}, the H\"{o}lder inequality and Definition \ref{3d1'},
it follows that
\begin{align*}
\lf|\int_{\rn}a(x)g(x)\,dx\r|
&=\inf_{P\in\cp_s(\rn)}\lf|\int_{\rn}a(x)\lf[g(x)-P(x)\r]\,dx\r|\\
&\le \|a\|_{L^r(\rn)}\inf_{P\in\cp_s(\rn)}\lf[\int_{\rn}\lf|g(x)-P(x)\r|^{r'}\,dx\r]^{1/r'}\noz\\
&\le \frac{|B|^{1/r}}{\|\chi_B\|_{\lv}}\inf_{P\in\cp_s(\rn)}
\lf[\int_{\rn}\lf|g(x)-P(x)\r|^{r'}\,dx\r]^{1/r'}\le \|g\|_{\lr}\noz.
\end{align*}
By this and Lemma \ref{3l3}, we find that, for any $m\in\nn$,
$\{\lz_i\}_{i=1}^{m}\st\mathbb{C}$,
a sequence $\{a_i\}_{i=1}^{m}$ of $(\vp,r,s)$-atoms
supported, respectively, on $\{B_i\}_{i=1}^{m}\st\mathfrak{B}$
and $f=\sum_{i=1}^m\lambda_ia_i\in \vfah$,
\begin{align*}
\lf|L_g(f)\r|&=\lf|\int_{\rn}f(x)g(x)\,dx\r|
\le \sum_{i=1}^m|\lambda_i|\int_{\rn}|a_i(x)g(x)|\,dx\\
&\le \sum_{i=1}^m|\lambda_i|\|g\|_{\lr}
\le \|f\|_{\vfah}\|g\|_{\lr},
\end{align*}
which, together with the fact that $\vfah$ is dense in
$\vah$ and Lemma \ref{3l2}, implies that (i) holds true.

Conversely, for any $B\in\mathfrak{B}$, let
$$\Pi_B:\ L^1(B)\longrightarrow \cp_s(\rn)$$
be the natural projection satisfying,
for any $f\in L^1(B)$ and $q\in\cp_s(\rn)$,
\begin{align}\label{3e5}
\int_B\Pi_B(f)(x)q(x)\,dx=\int_Bf(x)q(x)\,dx.
\end{align}
Then there exists a positive constant $C_{(s)}$, depending on $s$, such that,
for any $B\in\mathfrak{B}$ and $f\in L^1(B)$,
\begin{align}\label{3e6}
\sup_{x\in B}\lf|\Pi_B(f)(x)\r|\le C_{(s)}\frac1{|B|}\int_B|f(y)|\,dy.
\end{align}
Indeed, if $B:=B_0$, then one may find an orthonormal basis
$\{q_{\az}\}_{|\az|\le s}$ of $\cp_s(\rn)$ with respect to the
$L^2(B_0)$ norm. By \eqref{3e5}, we know that, for any $f\in L^1(B_0)$,
$$\Pi_{B_0}(f)
=\sum_{|\az|\le s}\lf[\int_{B_0}\Pi_{B_0}(f)(y)\overline{q_{\az}(y)}\,dy\r]q_{\az}
=\sum_{|\az|\le s}\lf[\int_{B_0}f(y)\overline{q_{\az}(y)}\,dy\r]q_{\az}.$$
Thus, there exists some $\az_0\in\zz_+^n$ with $|\az_0|\le s$, such that
\begin{align}\label{3e7}
\sup_{x\in B_0}\lf|\Pi_{B_0}(f)(x)\r|
\ls\sup_{x\in B_0}\lf\{\lf[\int_{B_0}|f(y)||q_{\az_0}(y)|\,dy\r]|q_{\az_0}(x)|\r\}
\ls\frac1{|B_0|}\int_{B_0}|f(y)|\,dy,
\end{align}
which implies that \eqref{3e6} holds true for $B:=B_0$. In addition, for any
$\ell\in(0,\fz)$ and $f\in L^1(\rn)$,
let $D_{\ell^{\va}}(f)(\cdot):=\ell^{\nu}f(\ell^{\va}\cdot)$.
Then, from \eqref{3e5}, we deduce that, for any $\ell\in(0,\fz)$,
$f\in L^1(B^{(\ell)})$ with $B^{(\ell)}$ as in \eqref{2e2'} and $q\in\cp_s(\rn)$,
\begin{align*}
\int_{B^{(\ell)}}\lf(D_{\ell^{-\va}}\circ\Pi_{B_0}\circ D_{\ell^{\va}}\r)(f)(x)q(x)\,dx
&=\ell^{-\nu}\int_{B^{(\ell)}}\Pi_{B_0}\lf(D_{\ell^{\va}}(f)\r)(\ell^{-\va}x)q(x)\,dx\\
&=\int_{B_0}D_{\ell^{\va}}(f)(y)q(\ell^{\va}y)\,dy
=\int_{B^{(\ell)}}f(x)q(x)\,dx,
\end{align*}
which implies that
$\Pi_{B^{(\ell)}}(f)=(D_{\ell^{-\va}}\circ\Pi_{B_0}\circ D_{\ell^{\va}})(f)$.
Therefore, by \eqref{3e7}, we find that, for any $\ell\in(0,\fz)$ and
$f\in L^1(B^{(\ell)})$,
\begin{align*}
\sup_{x\in B^{(\ell)}}\lf|\Pi_{B^{(\ell)}}(f)(x)\r|
&=\ell^{-\nu}\sup_{x\in B^{(\ell)}}\lf|\Pi_{B_0}\lf(D_{\ell^{\va}}(f)\r)(\ell^{-\va}x)\r|\\
&\ls\ell^{-\nu}\frac1{|B_0|}\int_{B_0}\lf|D_{\ell^{\va}}(f)(y)\r|\,dy
\sim \frac1{|B^{(\ell)}|}\int_{B^{(\ell)}}|f(y)|\,dy.
\end{align*}
Thus, for any $\ell\in(0,\fz)$, \eqref{3e6} holds true for $B:=B^{(\ell)}$.
Similarly, since,
for any $\ell\in(0,\fz),\,z\in\rn$ and $f\in L^1(z+B^{(\ell)})$,
$\Pi_{z+B^{(\ell)}}(f)=(\tau_{z}\circ\Pi_{B^{(\ell)}}\circ\tau_{-z})(f)$,
where $\tau_{z}(f)(\cdot):=f(\cdot-z)$, it follows that \eqref{3e6} holds true
for any $z+B^{(\ell)}$ with $z\in\rn$ and $\ell\in(0,\fz)$.
This proves \eqref{3e6}.

For any $r\in(1,\fz]$ and $B\in\mathfrak{B}$,
let $L^r_0(B):=\{f\in L^r(B):\,\Pi_B(f)=0\}$.
Then $L^r_0(B)$ is a closed subspace of $L^r(B)$, where one should identify $L^r(B)$
with all the $L^r(\rn)$ functions vanishing outside $B$. With this identification,
for any $f\in L^r_0(B)$,
$$a:=\frac{|B|^{1/r}}{\|\chi_B\|_{\lv}}\|f\|_{L^r(B)}^{-1}f$$
is a $(\vp,r,s)$-atom. By this and Lemma \ref{3l4},
we easily know that, for any $L\in[\vh]^*=[\vah]^*$ and $f\in L^r_0(B)$,
\begin{align}\label{3e8}
|L(f)|\le \frac{\|\chi_B\|_{\lv}}{|B|^{1/r}}\|L\|_{[\vah]^*}\|f\|_{L^r(B)}.
\end{align}
Therefore, $L$ is a bounded linear functional on $L^r_0(B)$ which, by the
Hahn-Banach theorem (see, for example, \cite[Theorem 3.6]{ru91}),
can be extended to the space $L^r(B)$ without increasing its norm.
When $r\in(1,\fz)$, by the duality $[L^r(B)]^*=L^{r'}(B)$, where $1/r+1/r'=1$,
we know that there exists an $h\in L^{r'}(B)$ such that, for any $f\in L^r_0(B)$,
\begin{align}\label{3e9}
L(f)=\int_B f(x)h(x)\,dx.
\end{align}
When $r=\fz$, from the fact that $L^{\fz}_0(B)\st L^{\wz r}(B)$ with $\wz r\in[1,\fz)$
and the Hahn-Banach theorem again, we deduce that the bounded linear functional $L$
on $L^{\fz}_0(B)$ can be extended to $L^{\wz r}(B)$ without increasing its norm.
By this and \eqref{3e9}, we further conclude that
there exists some $h\in L^{\wz r'}(B)\st L^1(B)$ such that, for any $f\in L^{\fz}_0(B)$,
\eqref{3e9} also holds true. Thus, for any $r\in(1,\fz]$,
there exists an $h\in L^{r'}(B)$ such that,
for any $f\in L^{r}_0(B)$, \eqref{3e9} holds true.

Let $r\in(1,\fz]$.
Next we show that, if there exists another function $\wz h\in L^{r'}(B)$ such that,
for any $f\in L^{r}_0(B)$, $L(f)=\int_B f(x)\wz h(x)\,dx$, then $h-\wz h\in\cp_s(B)$,
where $\cp_s(B)$ denotes all the $\cp_s(\rn)$ elements vanishing outside $B$.
To this end, it suffices to show that, if $h,\,\wz h\in L^{1}(B)$ such that,
for any $f\in L^{\fz}_0(B)$, $\int_B f(x)h(x)\,dx=\int_B f(x)\wz h(x)\,dx$,
then $h-\wz h\in\cp_s(B)$. Indeed, for any $f\in L^{\fz}_0(B)$, we have
\begin{align}\label{3e12}
0&=\int_B \lf[f(x)-\Pi_B(f)(x)\r]\lf[h(x)-\wz h(x)\r]\,dx\\
&=\int_B f(x)\lf[h(x)-\wz h(x)\r]\,dx-\int_B \Pi_B(f)(x)\Pi_B\lf(h-\wz h\r)(x)\,dx\noz\\
&=\int_B f(x)\lf[h(x)-\wz h(x)\r]\,dx-\int_B f(x)\Pi_B\lf(h-\wz h\r)(x)\,dx\noz\\
&=\int_B f(x)\lf[h(x)-\wz h(x)-\Pi_B\lf(h-\wz h\r)(x)\r]\,dx.\noz
\end{align}
In addition, we claim that, for any $B\in\mathfrak{B}$,
\begin{align}\label{3e13}
L^{\fz}_0(B)=L^{\fz}(B)/{\cp_s(B)}.
\end{align}
Actually, applying \cite[Theorem 1.1]{sawa17} with $X:=L^1(B)$ and
$V:=\cp_s(B)$ and the fact that $\cp_s(B)\st L^1(B)\st[L^{\fz}(B)]^*$,
we easily obtain \eqref{3e13}. By this, we find that,
for any $g\in L^{\fz}(B)$, $\{g+P:\, P\in\cp_s(B)\}\in L^{\fz}_0(B)$.
Thus, by \eqref{3e12} and \eqref{3e5}, we conclude that,
for any $g\in L^{\fz}(B)$,
\begin{align*}
\int_B g(x)\lf[h(x)-\wz h(x)-\Pi_B\lf(h-\wz h\r)(x)\r]\,dx=0,
\end{align*}
which implies that, for almost every $x\in B$,
$h(x)-\wz h(x)=\Pi_B(h-\wz h)(x)$ and hence $h-\wz h\in\cp_s(B)$.
Therefore, for any $r\in(1,\fz]$ and $f\in L^{r}_0(B)$, there exists
a unique $h\in L^{r'}(B)/{\cp_s(B)}$ such that \eqref{3e9} holds true.

Assume $r\in(1,\fz]$.
For any $k\in\nn$ and $f\in L^r_0(B^{(k)})$,
let $g_k\in L^{r'}(B^{(k)})/{\cp_s(B^{(k)})}$
be the unique element such that
\begin{align*}
L(f)=\int_{B^{(k)}} f(x)g_k(x)\,dx,
\end{align*}
where, for any $k\in\nn$, $B^{(k)}$ is as in \eqref{2e2'}.
Then it easy to see that, for any $i,\,k\in\nn$ with $i<k$, $g_k|_{B^{(i)}}=g_i$.
From this and the fact that, for any $f\in\vfah$, there exists some $k_0\in\nn$
such that $f\in L^r_0(B^{(k_0)})$, it follows that, for any $f\in\vfah$,
\begin{align}\label{3e16}
L(f)=\int_{\rn}f(x)g(x)\,dx,
\end{align}
where $g(x):=g_k(x)$ for any $x\in B^{(k)}$ with $k\in\nn$.

Thus, to completes the proof of Theorem \ref{3t1}(ii), it remains to prove that
$g\in\lr$. Indeed,
by \eqref{3e16} and \eqref{3e8}, it is easy to see that, for any $r\in(1,\fz]$
and $B\in\mathfrak{B}$,
\begin{align}\label{3e11}
\|g\|_{[L^{r}_0(B)]^*}\le \frac{\|\chi_B\|_{\lv}}{|B|^{1/r}}\|L\|_{[\vah]^*}.
\end{align}
In addition, by an argument similar to that used in the proof of
\cite[p.\,52, (8.12)]{mb03}, we conclude that, for any $r\in(1,\fz]$
and $B\in\mathfrak{B}$,
\begin{align*}
\|g\|_{[L^{r}_0(B)]^*}=\inf_{P\in\cp_s(\rn)}\|g-P\|_{L^{r'}(B)},
\end{align*}
which, combined with Definition \ref{3d1'} and \eqref{3e11}, further implies that,
for any $r\in(1,\fz]$,
\begin{align*}
\|g\|_{\lr}=\sup_{B\in\mathfrak{B}}\frac{|B|^{1/r}}{\|\chi_B\|_{\lv}}
\inf_{P\in\cp_s(\rn)}\|g-P\|_{L^{r'}(B)}
=\sup_{B\in\mathfrak{B}}\frac{|B|^{1/r}}{\|\chi_B\|_{\lv}}\|g\|_{[L^{r}_0(B)]^*}
\le \|L\|_{[\vah]^*}.
\end{align*}
This finishes the proof of Theorem \ref{3t1}(ii) and hence of Theorem \ref{3t1}.
\end{proof}

\bigskip

\noindent  Long Huang, Jun Liu, Dachun Yang (Corresponding author) and Wen Yuan

\medskip

\noindent  Laboratory of Mathematics and Complex Systems
(Ministry of Education of China),
School of Mathematical Sciences, Beijing Normal University,
Beijing 100875, People's Republic of China

\smallskip

\noindent {\it E-mails}:
\texttt{longhuang@mail.bnu.edu.cn} (L. Huang)

\noindent\phantom{{\it E-mails:}}
\texttt{junliu@mail.bnu.edu.cn} (J. Liu)

\noindent\phantom{{\it E-mails:}}
\texttt{dcyang@bnu.edu.cn} (D. Yang)

\noindent\phantom{{\it E-mails:}}
\texttt{wenyuan@bnu.edu.cn} (W. Yuan)

\end{document}